\font\smallit=cmti10
\renewcommand\section{\@startsection {section}{1}{\z@}
{-30pt \@plus -1ex \@minus -.2ex}
{2.3ex \@plus.2ex}
{\normalfont\normalsize\bfseries\boldmath}}
\renewcommand\subsection{\@startsection{subsection}{2}{\z@}
{-3.25ex\@plus -1ex \@minus -.2ex}
{1.5ex \@plus .2ex}
{\normalfont\normalsize\bfseries\boldmath}}
\renewcommand{\@seccntformat}[1]{\csname the#1\endcsname. }
\newtheorem{theorem}{Theorem}
\newtheorem{lemma}{Lemma}
\newtheorem{corollary}{Corollary}
\newtheorem{definition}{Definition}
\begin{document}

\begin{center}
%% \uppercase{\bf Consecutive primes} \\[4pt]  %% use these two lines rather than the next two
%% \uppercase{\bf which are widely digitally delicate}
{\Large \bf Consecutive primes}
\vskip 5pt
{\Large \bf which are widely digitally delicate}
\vskip 5pt
{\Large \bf and Brier numbers}
\vskip 20pt
{\bf Michael Filaseta}\\
{\smallit Dept.~Mathematics, 
University of South Carolina, 
Columbia, SC 29208, USA}\\
{\tt filaseta@math.sc.edu}\\ 
\vskip 20pt
{\bf Jacob Juillerat}\\
{\smallit Dept.~Mathematics, 
University of North Carolina Pembroke, 
Pembroke, NC 28372, USA}\\
{\tt jacob.juillerat@uncp.edu}\\ 
\vskip 20pt
{\bf Thomas Luckner}\\
{\smallit Dept.~Mathematics, 
University of South Carolina, 
Columbia, SC 29208, USA}\\
{\tt luckner@email.sc.edu}\\ 
\end{center}

\vskip 5pt
\centerline{\phantom{\smallit Received: , Revised: , Accepted: , Published: }} % We will fill in the dates

\vskip 12pt %% change to \vskip 5pt

\vskip 15pt

\centerline{\bf Abstract}
\vskip 5pt\noindent
Making use of covering systems and a theorem of D.~Shiu,
the first and second authors showed that for every positive integer $k$, there exist $k$ consecutive widely digitally delicate primes. They also noted that for every positive integer $k$, there exist $k$ consecutive primes which are Brier numbers.  We show that for every positive integer $k$,  there exist $k$ consecutive primes that are both widely digitally delicate and Brier numbers. 

\pagestyle{myheadings} 
%% \markright{\smalltt INTEGERS: )\hfill}  %%  (remove %% at beginning of line when submitting to journal)
\thispagestyle{empty} 
\baselineskip=12.875pt 
\vskip 30pt
\section{Introduction}

In 1958, R.~M.~Robinson \cite{rob} made tables of primes that were of the form $k\cdot 2^n+1$ for odd integers $1\leq k <100$ and $0\leq n \leq 512$. 
The table found primes for all of these $k$ except 47. 
These tables led to a result of W.~Sierpi\'{n}ski \cite{sier} where he proved there are infinitely many odd positive integers $k$ such that $k\cdot 2^n+1$ is composite for every positive integer $n$.  Such odd $k$ are called Sierpi\'{n}ski numbers.
Sierpi\'{n}ski's proof shows that in fact, for every nonnegative integer $m$, the number 
\[
k = 15511380746462593381 + 36893488147419103230\,m
\]
is a Sierpi\'{n}ski number.  
Here, we have added the condition that $k$ is odd in the definition of a Sierpi\'{n}ski number, as is common, though the original paper of Sierpi\'{n}ski did not have such a condition.
This added condition arose in part due to the desire to find the smallest Sierpi\'{n}ski number as we explain below.

Since Sierpi\'{n}ski's work, there have been a number of attempts to find small explicit examples of Sierpi\'{n}ski numbers. 
In 1962, J. Selfridge (see \cite{slosier}) found the smallest known Sierpi\'{n}ski number, $k=78557$. 
Observe that if we were to remove the condition that $k$ is odd from the definition of a Sierpi\'{n}ski number, then it is likely the number $65536$ would be a smaller Sierpi\'{n}ski number.  The point here is that, for a positive integer $n$, the number $65536 \cdot 2^{n} + 1 = 2^{n+ 16} + 1$ is a prime only if $n + 16$ is a power of $2$.  Thus, the number $65536$ satisfies $65536 \cdot 2^{n} + 1$ is composite for all positive integers $n$ if and only if 
$2^{16} + 1$ is the largest Fermat prime, which is a widely held belief based on heuristic arguments that seems very difficult to prove.

Following the work of Sierpi\'{n}ski \cite{sier}, 
Selfridge used what is known as a covering system to prove $78557$ is a Sierpi\'{n}ski number; covering systems will be discussed in detail later. 
While $78857$ is the smallest known Sierpi\'{n}ski number, it has not been proven to be the smallest one. 
Eliminating $k$ as a possible Sierpi\'{n}ski number can be done by finding a prime of the form $k \cdot 2^{n} + 1$ for some positive integer $n$,
and an effort has been made to find such primes for all odd positive integers $k < 78557$.
In March 2002, there were 17 values of $k$ left to check that are smaller than $78557$ and possible Sierpi\'{n}ski numbers. 
At that time, L.~Heim and D.~Norris started the Seventeen or Bust project which aimed to prove $78557$ is the smallest Sierpi\'{n}ski number (see \cite{pg}). 
The project resulted in showing 11 of the 17 possible Sierpi\'{n}ski numbers less than $78557$ are in fact not Sierpi\'{n}ski numbers. 
Then PrimeGrid took over and showed one more of the remaining possibilities less than $78557$ is not a Sierpi\'{n}ski number in October 2016 leaving 5 more 
possible Sierpi\'{n}ski numbers less than 78557. 
PrimeGrid has continued to run and, as of February 2021, % has now exceed values larger than $31875742$ for $n$ for the remaining $k$.
the remaining possible Sierpi\'{n}ski numbers less than $78557$ are $21181$, $22699$, $24737$, $55459$, and $67607$. 
If $k$ is one of these five remaining numbers, then PrimeGrid has thus far verified that $k \cdot 2^{n} + 1$ is composite for all $n \le 31875742$.  

%Variations of Sierpi\'{n}ski numbers have also become a topic of interest. Bowen \cite{bow} showed, in a short note, that, given a fixed base $b$, there are infinitely many positive integers $k$ for which $k\cdot b^n+1$ is odd and composite for all positive integers $n$. A. Brunner, C. Caldwell, et. al. \cite{brucald} were able to show Bowen's result for all bases $b$, in other words, there exist a $k$ for any base, $b$, where $k\cdot b^n+1$ is composite for all $n$. In the same paper, they found base $b$ Sierpi\'{n}ski numbers where $k, k^2, k^3, \ldots, k^{2^r-1}$ are all base $b$ Sierpi\'{n}ski under some conditions on $b+1$ and Fermat numbers evaluated at $b$. Around the same time Filaseta, Finch and Kozek \cite{Fil} showed for any positive integer $r$ there is a positive integer $k$ such that $k, k^2,\ldots, k^r$ are Sierpi\'{n}ski (not base $b$). 

Around the same time that Sierpi\'{n}ski was working with his new class of numbers, Hans Riesel \cite{ries} showed there are infinitely many odd integers $k$ such that $k\cdot 2^n-1$ is composite for every positive integer $n$.
Riesel showed that for every nonnegative integer $m$, the odd number $k = 509203+11184810\,m$ has this property.  
The odd positive integers $k$ with this property are called Riesel numbers. 

The number $509203$ is conjectured to be the smallest Riesel number (see \cite{slories}). 
The Reisel Sieve Project, analogous to Seventeen or Bust, started in August 2003 to prove the odd positive integers less than $509203$ are not Riesel.  
The project, along with others like PrimeGrid, have reduced the number of possible Riesel numbers $k < 509203$ down to 48.
As of January 2021, PrimeGrid has taken over and shown that these remaining 48 values of $k$ are such that $k \cdot 2^{n} - 1$ is composite for all 
$n \le 11300000$ (see \cite{slories}).

Obtaining Sierpi\'{n}ski and Riesel numbers is typically done by solving a system of linear congruences, where the solution is given in the form of an arithmetic progression.  To find such congruences, one makes use of a covering system. 

\begin{definition}[Covering System]
For nonnegative integers $a_{i}$ and positive integers $b_{i}$, the set 
\[
\{a_1\hskip-4pt \pmod{b_1}, \quad a_2\hskip-4pt \pmod{b_2} ,\quad \ldots, \quad a_m\hskip-4pt \pmod{b_m}\}
\]
is called a \textit{covering system} (or simply a \textit{covering}) if for every integer $n$, we have $n\equiv a_i \pmod{b_i}$ for at least one $i\in\{ 1, 2, \ldots, m\}$.
\end{definition}

By choosing $a_i$ and $b_i$ as in the definition and choosing $p_i$ prime so that $p_i | (2^{b_i}-1)$ and the $p_i$'s are distinct, solutions $k$ to the $m$ congruences
\[
k\cdot 2^{a_1}+ 1 \equiv 0 \hskip -4pt \pmod {p_1}, \quad \ldots, \quad k\cdot 2^{a_m}+ 1 \equiv 0 \hskip -4pt \pmod {p_m}
\]
are Sierpi\'{n}ski, at least for $k>\max_{1\leq i\leq m}\{p_i\}$. Similarly, a covering system can be used to obtain Riesel numbers.

In this paper, we will use a covering system argument to obtain Brier numbers, that is, odd positive integers $k$ that are both Sierpi\'{n}ski numbers and Riesel numbers.  
The existence of such numbers was first established by Brier (see \cite{jacob}). Brier showed that
\[
 29364695660123543278115025405114452910889
 \]
is both a Sierpi\'{n}ski and Riesel number. 

As was done to obtain Sierpi\'{n}ski and Riesel numbers, Brier found covering systems to determine an arithmetic progression of odd positive integers $k$ for which $k \cdot 2^{n} + 1$ and $k \cdot 2^{n} - 1$ are both composite for every positive integer $n$ [1998 unpublished]. 
In August 2009, A.~Wesolowski (see \cite{slobrier}) showed that the smallest Brier number must be larger than $10^9$.
The smallest known Brier number is $3316923598096294713661$ found by C.~Clavier (see \cite{slobrier}) in December 2013. 
Clavier showed that any number in the arithmetic progression
\[
3316923598096294713661 + 3770214739596601257962594704110\,m,
\]
where $m$ is a nonnegative integer, is a Brier number.

One result, which this paper will model its main result after, is the following found in \cite{jacob}.

\begin{theorem}\label{thm1filjui}
For every positive integer $k$, there exist $k$ consecutive primes all of which are Brier numbers.
\end{theorem}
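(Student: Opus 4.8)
The strategy is to reduce the statement to two ingredients: the construction, via a covering system, of an arithmetic progression every term of which is a Brier number, and the theorem of D.~Shiu on arbitrarily long runs of consecutive primes lying in a fixed residue class.

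First I would recall how a covering system produces Brier numbers. Fix a covering $\{a_i \pmod{b_i}\}_{i=1}^{m}$ and distinct odd primes $p_1,\dots,p_m$ with $p_i \mid 2^{b_i}-1$; the congruences $k\cdot 2^{a_i}\equiv -1 \pmod{p_i}$ then force $p_i \mid k\cdot 2^{n}+1$ whenever $n\equiv a_i \pmod{b_i}$, so that $k\cdot 2^{n}+1$ is composite for every positive integer $n$ once $k>\max_i p_i$. Carrying out the same with a second covering and a second set of odd primes $q_1,\dots,q_\ell$, disjoint from the first, handles $k\cdot 2^{n}-1$ in the same way; this is precisely Brier's construction, which I would invoke rather than redo. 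Let $N = 2\prod_i p_i \prod_j q_j$ and let $c$ be the residue class modulo $N$ determined by all of the above congruences together with $k\equiv 1\pmod 2$. Then every integer $k\equiv c\pmod N$ that is sufficiently large is a Brier number, and --- the observation that makes the next step possible --- since $c\cdot 2^{a_i}\equiv -1\pmod{p_i}$ we have $p_i\nmid c$, similarly $q_j\nmid c$, and $c$ is odd, whence $\gcd(c,N)=1$. (Alternatively, one may simply quote Brier's explicit progression and check the coprimality of its initial term and modulus.)

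Now I would apply Shiu's theorem. Since $\gcd(c,N)=1$, for the given $k$ there is a run of $k$ consecutive primes --- consecutive in the sequence of all primes --- each congruent to $c$ modulo $N$, and such a run may be taken to begin as high as we please; in particular we may demand that its least member exceed $\max_i p_i$, $\max_j q_j$, and any other finite threshold below which the covering argument might fail. Each prime in the run then belongs to the progression $c+N\mathbb{Z}$ and is large enough to be covered by the conclusion above, hence is a Brier number. This produces $k$ consecutive primes all of which are Brier numbers.

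The genuine obstacle, I expect, lies entirely in the first step: exhibiting a Brier covering system in the first place --- two covering systems whose associated sets of auxiliary primes can be chosen disjoint. This is delicate (it is exactly what Brier achieved), but for the present theorem it may be used as a black box. Granting it, the remainder is bookkeeping: confirming $\gcd(c,N)=1$ so that Shiu's theorem applies, and noting that Shiu's runs may be slid past the finitely many small values of $k$ not directly excluded by the covering argument.
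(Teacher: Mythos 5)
Your proposal is correct and follows essentially the same route as the paper: quote a known arithmetic progression of Brier numbers (the paper uses Clavier's explicit progression, whose first term and modulus are visibly coprime, rather than re-deriving one from Brier's coverings), observe the coprimality needed for the progression to contain infinitely many primes, and apply Shiu's theorem on strings of consecutive primes in a fixed residue class. Your extra care in sliding the run of consecutive primes past the finitely many small values not handled by the covering argument is sound, though unnecessary when one uses Clavier's progression directly, since every term of it is already a Brier number.
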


The theorem is a consequence of the arithmetic progression above. 
Since
\[
3316923598096294713661  \quad \text{ and } \quad 3770214739596601257962594704110
 \]
are relatively prime, a result of D.~Shiu \cite{shiu} implies Theorem~\ref{thm1filjui}. 
An extension of Shiu's theorem due to J.~Maynard \cite{maynard} will play an equivalent role in our main result in this paper. 
Shiu showed that in any arithmetic progression containing infinitely many primes, 
there are arbitrarily long sequences of consecutive primes \cite{shiu}. 
For an arithmetic progression $Am+B$, where $A$, $B$ and $m$ are nonnegative integers, to contain infinitely many primes, we want $\gcd(A,B)=1$ and $A > 0$. 
We see then that Theorem~\ref{thm1filjui} follows from Clavier's arithmetic progression of Brier numbers and Shiu's theorem. 
The work of Shiu has been strengthened in the work of W.~D.~Banks, T.~Freiberg and C.~L.~Turnage-Butterbaugh \cite{bft} and J.~Maynard \cite{maynard} (also, see T.~Freiberg \cite{Frei}).
For example, Maynard extends Shiu's work in Theorem 3.3 of \cite{maynard} by showing for every positive integer $k$, in any arithmetic progression $Am+B$, where $A > 0$, $B \ge 0$ and $m \ge 0$ are integers with $A$ and $B$ fixed and $\gcd(A,B) = 1$, a positive proportion of positive integers $\ell$ are such that $p_{\ell}, p_{\ell+1}, \ldots, p_{\ell+k-1}$
are all in the arithmetic progression $Am+B$, where $p_{j}$ denotes the $j^{\rm th}$ prime.

The main result of this paper not only refers to Brier numbers but also refers to widely digitally delicate primes. 
First, we define a digitally delicate prime number.

\begin{definition}[Digitally Delicate]
Let $b$ be an integer $\ge 2$.  
A prime number is called \textit{digitally delicate in base $b$} (or simply \textit{digitally delicate} for $b = 10$)
if changing any base $b$ digit of the prime to another base $b$ digit  results in a composite number.
\end{definition}

In 1979, P.~Erd{\H o}s \cite{erd79} proved that there are infinitely many digitally delicate prime numbers.  
The first of these is 294001.  Hence, 
\[
d94001, \hspace{2em} 2d4001, \hspace{2em} 29d001, \hspace{2em} 294d01, \hspace{2em} 2940d1, \hspace{1em}\text{ and} \hspace{1em} 29400d
\]
are composite or equal to $294001$ for every $d\in \{0,1,2,3,4,5,6,7,8,9\}$. 
The proof of Erd{\H o}s was done using a partial covering system and a sieve argument. 
In 2011, Tao \cite{tao} was able to refine the sieve part of the argument to show that a positive lower asymptotic density of primes are digitally delicate.  
Konyagin \cite{kon} proved a related result for composite numbers using a similar argument showing that, in particular, 
a positive lower asymptotic density of composite numbers which are coprime to $10$ remain composite if any base $10$ digit of the number is changed to another base $10$ digit.  These authors also establish analogous results for arbitrary bases.

In 2021, the first author and J.~Southwick \cite{jerm} considered a more restrictive class of digitally delicate primes in base $10$ and showed a similar result to Tao's for this more restrictive class.  These numbers also are required to be composite when changing any one of the infinitely many leading zeroes in base $10$. 

\begin{definition}[Widely Digitally Delicate]
Let $b$ be an integer $\ge 2$.  
A prime number is called \textit{widely digitally delicate in base $b$} (or simply \textit{widely digitally delicate} for $b = 10$) if changing any base $b$ digit of the prime, including any of the infinitely many leading $0$'s, to another base $b$ digit results in a composite number.
\end{definition}

In the same paper \cite{jerm}, the first author and J.~Southwick elaborate that the result is possible in some other bases, not just base  $10$. 
To see that the collection of widely digitally delicate primes is more refined than digitally delicate primes, consider the digitally delicate prime listed previously, $294001$. 
The prime is not widely digitally delicate since $10294001$ is prime. 
Furthermore, there are no widely digitally delicate primes in the first $10^{9}$ positive integers \cite{jerm}. 
Their result was mentioned in Quanta Magazine \cite{quant} along with the next result by the first and third author \cite{jacob} (also, see \cite{mparker}).
We note that an explicit example of a widely digitally delicate prime has been given by J.~Grantham \cite{Gran} (also, see the comments in \cite{quant}).  The example has $4030$ digits. 

\begin{theorem}\label{jacobthm}
For every positive integer $k$, there exist $k$ consecutive widely digitally delicate primes.
\end{theorem}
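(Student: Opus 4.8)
The plan is to mirror the deduction of Theorem~\ref{thm1filjui} from Clavier's progression: first produce an arithmetic progression $a\pmod M$ with $\gcd(a,M)=1$, all of whose prime members are widely digitally delicate, and then apply Shiu's theorem (or, with room to spare, Maynard's Theorem~3.3 \cite{maynard}) to extract arbitrarily long runs of consecutive primes inside it. The progression is built from a covering system in the manner of \cite{jerm}, itself a refinement of the Erd{\H o}s--Tao argument.

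Concretely, fix a covering system $\{r_i\pmod{m_i}:1\le i\le s\}$, and for each $i$ and each integer $t$ with $0<|t|\le 9$ choose a prime $q_{i,t}$ dividing $10^{m_i}-1$, all the $q_{i,t}$ distinct. By the Chinese Remainder Theorem, solve
\[
a\equiv -\,t\,10^{r_i}\pmod{q_{i,t}}\quad(1\le i\le s,\ 0<|t|\le 9),\qquad a\equiv 1\pmod 2,
\]
obtaining a class $a\pmod M$ with $M=2\prod_{i,t}q_{i,t}$. Since each $q_{i,t}>9$ lies outside $\{2,5\}$, none of these primes divides $t\,10^{r_i}$ or $2$, so $\gcd(a,M)=1$ and the progression contains infinitely many primes.

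Now let $p$ be a prime with $p\equiv a\pmod M$, say with $L$ base-$10$ digits, written with its infinitely many leading zeros. Altering the digit in the $10^{j}$ place by an amount $t$, $0<|t|\le 9$, replaces $p$ by $p+t\,10^{j}$; choosing $i$ with $j\equiv r_i\pmod{m_i}$ gives $10^{j}\equiv 10^{r_i}\pmod{q_{i,t}}$, so $p+t\,10^{j}\equiv a+t\,10^{r_i}\equiv 0\pmod{q_{i,t}}$. Thus $q_{i,t}$ always divides the altered number, and provided $p$ exceeds a constant depending only on the covering, the altered number also exceeds $q_{i,t}$ — hence is composite — in every case except the replacement of the leading nonzero digit $d$ by $0$, which produces $p\bmod 10^{L-1}$. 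That quantity is again a positive multiple of $q_{i,-d}$, so it is composite unless it equals $q_{i,-d}$; hence every prime $p\equiv a\pmod M$ above the constant is widely digitally delicate except for those in the sparse family of numbers $d\cdot 10^{L-1}+q$, $1\le d\le 9$, $q$ one of the finitely many $q_{i,t}$, of which there are only $O(\log x)$ below $x$. In \cite{jerm} this leading-digit case is handled with care; if the progression can be arranged so that \emph{no} exceptions occur, the next step simplifies accordingly.

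Finally, since $a\pmod M$ contains infinitely many primes, Shiu's theorem produces, for every $k$, arbitrarily long blocks $p_\ell,p_{\ell+1},\dots,p_{\ell+k-1}$ of consecutive primes all lying in $a\pmod M$; when the progression has no exceptional primes these are $k$ consecutive widely digitally delicate primes. To absorb the exceptional family when it is present, one uses Maynard's Theorem~3.3 instead: it yields a positive proportion of indices $\ell\le x$ with $p_\ell,\dots,p_{\ell+k-1}$ all in $a\pmod M$, of which only $O(k\log x)$ can involve an exceptional prime, so infinitely many blocks survive. I expect the main obstacle to be the covering-system step: unlike for ordinary digitally delicate primes, where a partial covering together with a sieve suffices, the ``wide'' condition forces a genuine full covering system whose moduli $m_i$ must be chosen so that the numbers $10^{m_i}-1$ jointly supply, distinctly, the eighteen primes $q_{i,t}$ needed for each $m_i$ — together with the delicate handling of the leading-digit change. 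That is precisely the heart of \cite{jerm}; the passage from ``a positive density of primes'' to ``$k$ consecutive primes'' is exactly the role played by Shiu's theorem (sharpened by Maynard), just as for Theorem~\ref{thm1filjui}.
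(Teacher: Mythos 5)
Your strategy coincides with the one the paper (and \cite{jacob}, where Theorem~\ref{jacobthm} is actually proved) relies on: manufacture from covering systems an arithmetic progression $a\pmod{M}$ with $\gcd(a,M)=1$ all of whose sufficiently large primes are widely digitally delicate, then apply Shiu's theorem, or Maynard's Theorem~3.3, to place $k$ consecutive primes inside it. Two differences in execution deserve comment. (1) You ask for a \emph{single} covering $\{r_i\pmod{m_i}\}$ in which every modulus supplies eighteen distinct fresh primes $q_{i,t}\mid 10^{m_i}-1$; the constructions in \cite{jerm} and \cite{jacob} instead build a separate covering for each increment $d\in\{\pm1,\dots,\pm9\}$, with each congruence class consuming a single prime $p$ such that the order of $10$ modulo $p$ equals the modulus of that class. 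Your formulation is considerably more rigid (it excludes every modulus $m$ for which $10^m-1$ has few available prime factors) and is not literally what the cited construction delivers, but the rest of your argument is indifferent to which variant is used, so this is bookkeeping so long as you import the construction from the literature, as you intend. (2) You treat the one genuinely small digit change --- the leading digit replaced by $0$, yielding $p\bmod 10^{L-1}$, a positive multiple of some $q_{i,-d}$ and hence composite unless it equals that prime --- by counting: the exceptional primes number $O(\log x)$ up to $x$, so they spoil only $O(k\log x)$ of the $\gg_k x$ blocks Maynard provides. That counting is correct. The paper eliminates the exceptions outright via Lemma~\ref{initiallemma}, passing to a subprogression $A_0m+B_0$ in which $A_0m+B_0+d\cdot b^n\neq\pm p$ holds for all relevant $n$, $d$ and $p$, so that \emph{every} prime of the subprogression is widely digitally delicate and Shiu's theorem alone finishes. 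Both devices are sound; the paper's is cleaner and also delivers the positive-proportion statement without any exceptional-set accounting.
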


In this paper, we use the methods from \cite{jacob} to obtain a result similar to Theorem~\ref{jacobthm} where the primes are also Brier numbers.

\begin{theorem}[Main Theorem] \label{themainthm}
For every positive integer $k$, there exist $k$ consecutive primes 
$p_{\ell}, p_{\ell+1},\ldots,p_{\ell+k-1}$, 
each of which is both a widely digitally delicate prime and a Brier number.
Furthermore, the first primes $p_{\ell}$
in consecutive lists of $k$ such primes
have positive density (depending on $k$) in the set of prime numbers.  In particular, a positive proportion of the primes are both a widely digitally delicate prime and a Brier number.
\end{theorem}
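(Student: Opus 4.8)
The plan is to produce a single arithmetic progression $Am+B$ with $A>0$ and $\gcd(A,B)=1$ every prime element of which is simultaneously a widely digitally delicate prime and a Brier number, and then to apply Theorem~3.3 of \cite{maynard} to obtain, with positive density, runs of $k$ consecutive primes lying inside it. The progression is built by amalgamating two covering systems. The first is the covering of the exponents of $10$ underlying Theorem~\ref{jacobthm} in \cite{jacob} (itself built on the Filaseta--Southwick covering of \cite{jerm}): a finite covering $\{r_s \pmod{m_s}\}$ of $\mathbb{Z}_{\ge 0}$ together with, for each class $s$ and each nonzero $c\in\{-9,\ldots,9\}$, a prime $p_{s,c}>9$ dividing $10^{m_s}-1$. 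Imposing $N\equiv -c\,10^{r_s}\pmod{p_{s,c}}$ for every $s$ and $c$ forces $N+c\,10^{j}\equiv 0\pmod{p_{s,c}}$ whenever $j\equiv r_s\pmod{m_s}$, so that every base-$10$ single-digit alteration of $N$ -- the alteration at position $j$ changing $N$ by a quantity $c\,10^{j}$ with such a $c$, including the alteration of any of the infinitely many leading $0$'s -- is divisible by one of the primes $p_{s,c}$. The second consists of two covering systems of the exponents $n\ge 1$ of $2$: a covering with primes $q_i\mid 2^{b_i}-1$ and congruences $k\equiv -2^{-a_i}\pmod{q_i}$, and a covering with primes $q_i'\mid 2^{b_i'}-1$ and congruences $k\equiv 2^{-a_i'}\pmod{q_i'}$; these guarantee that $k\cdot 2^{n}+1$ (respectively $k\cdot 2^{n}-1$) is divisible by one of the $q_i$ (respectively $q_i'$) for every $n\ge 1$, so that any odd $k$ satisfying all of them is a Brier number.

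To merge the two systems one first arranges that the primes $p_{s,c}$, $q_i$, $q_i'$ are pairwise distinct. This is possible because in each covering there is ample freedom in the choice of the prime divisors of $10^{m_s}-1$ (respectively $2^{b_i}-1$, $2^{b_i'}-1$) attached to a given class: after fixing one system, we may carry out the construction of the other while avoiding the finitely many primes already spent, since for a suitable choice of moduli the relevant numbers $10^{m}-1$ and $2^{b}-1$ have far more admissible prime divisors exceeding $9$ than there are classes and digit differences to be served. Set $A = 2\prod_{s,c}p_{s,c}\prod_i q_i\prod_i q_i'$ and let $B$ be the residue modulo $A$ furnished by the Chinese Remainder Theorem from all of the above congruences together with $B\equiv 1\pmod 2$. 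Every one of the forced congruences holds along $Am+B$, and the residue of $B$ modulo each prime dividing $A$ is one of $\pm c\,10^{r_s}$, $\pm 2^{-a}$, or $1$, which -- the modulus being coprime to $10$ and exceeding $9$ in the first case and odd in the others -- is nonzero; hence $\gcd(A,B)=1$. Thus every $N\equiv B\pmod A$ has the property that each of its base-$10$ single-digit modifications is divisible by a fixed small prime and that $N\cdot 2^{n}+1$ and $N\cdot 2^{n}-1$ are each divisible by a fixed small prime for every $n\ge 1$.

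Now apply Theorem~3.3 of \cite{maynard} to the progression $Am+B$: a positive proportion of indices $\ell$ satisfy $p_\ell,p_{\ell+1},\ldots,p_{\ell+k-1}\in Am+B$. Fix such an $\ell$ and one of these primes $p=p_{\ell+t}$. Since $p$ is odd and, for every $n\ge1$, each of $p\cdot 2^{n}\pm1$ is divisible by a fixed small prime and (as $p$ is large) strictly exceeds it, each $p\cdot 2^{n}\pm 1$ is composite, so $p$ is a Brier number. Likewise each base-$10$ digit change of $p$ is divisible by a fixed small prime, and the resulting number is composite provided it exceeds that prime -- which it does in every case except one: writing $p$ with leading digit $d$ in position $J$, the change of that leading digit to $0$ produces $p-d\,10^{J}\in[0,10^{J})$, which might fail to be composite. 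But $p-d\,10^{J}$ fails to be composite only if it equals $0$, $1$, or one of the finitely many primes occurring among the moduli $p_{s,c}$, that is, only if $p$ has one of the forms $d\,10^{J}+1$ or $d\,10^{J}+p_{s,c}$. There are $O(\log X)$ primes of these forms up to $X$, hence $O(k\log X)$ indices $\ell\le X$ whose block $p_\ell,\ldots,p_{\ell+k-1}$ contains one; since this is of density zero, deleting these indices from Maynard's positive-density set of indices still leaves a positive-density set, for each index of which all of $p_\ell,\ldots,p_{\ell+k-1}$ are genuinely both widely digitally delicate primes and Brier numbers. The case $k=1$ yields the concluding statement that a positive proportion of all primes have both properties.

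The main obstacle is the compatibility bookkeeping of the second paragraph: one must verify that the Filaseta--Southwick-type covering and the Sierpi\'{n}ski/Riesel-type coverings can be realized with pairwise distinct primes -- in particular that each covering's moduli can be chosen so that $10^{m}-1$ (respectively $2^{b}-1$) supplies enough prime divisors exceeding $9$ to cover all classes and all eighteen digit differences while steering clear of the other covering's primes -- so that the combined modulus $A$ is squarefree and the CRT residue $B$ is coprime to it. The only genuinely number-theoretic subtlety beyond this is the leading-digit-to-zero modification flagged above: it is the sole digit change not rendered composite by divisibility alone, and it must be disposed of by the sparseness estimate just described, which remains compatible with the requirement that the $k$ primes be consecutive precisely because the exceptional primes have density zero among all integers.
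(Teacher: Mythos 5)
Your overall architecture matches the paper's: build one arithmetic progression $Am+B$ with $\gcd(A,B)=1$ by amalgamating a digit-change covering (with primes dividing $10^{m}-1$), a Sierpi\'nski covering and a Riesel covering (with primes dividing $2^{b}-1$), impose the residues by CRT, and invoke Theorem~3.3 of \cite{maynard}. Your handling of the ``result equals a prime rather than a proper multiple'' degeneracy is genuinely different from the paper's: the paper passes to an explicit subprogression (its Lemma~1, with $A_0=b^{\ell(v+2)}A$, $B_0=b^{\ell(v+1)}-b^{\ell}+B$) so that no digit change can ever equal $\pm p$ for $p\mid A$, whereas you delete the $O(k\log X)$ exceptional indices from Maynard's positive-density set. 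Your route is legitimate and arguably lighter, at the cost of losing the clean statement that \emph{every} prime in the progression has both properties.

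The genuine gap is the second paragraph, which you yourself flag as ``the main obstacle'' and then dispose of by assertion. The claim that ``for a suitable choice of moduli the relevant numbers $10^{m}-1$ and $2^{b}-1$ have far more admissible prime divisors exceeding $9$ than there are classes and digit differences to be served'' is not a consequence of any general theorem, and it is exactly the content that occupies the bulk of the paper. Whether one can build a covering system of the exponents $n$ of $2$, two of them in fact (one for $k\cdot 2^{n}+1$ and one for $k\cdot 2^{n}-1$), using for each modulus $b$ only primes $p$ with $\mathrm{ord}_p(2)=b$ that are (a) distinct across all congruence classes used, (b) distinct between the two coverings, and (c) disjoint from the large, fixed set $\mathcal P(A_1)$ of primes already consumed by the widely-digitally-delicate covering of \cite{jacob}, depends entirely on how many distinct prime factors the cyclotomic values $\Phi_b(2)$ actually have outside $\mathcal P(A_1)$ for the moduli $b$ one proposes to use. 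This must be verified by explicit factorization; the paper's Tables~4--5 record the counts $L(b)$ and $M(b)$ modulus by modulus, and its Tables~6--7 exhibit coverings with $447$ and $459$ congruence classes built to fit exactly within those counts (organized around residue classes modulo $13$ and modulo $11$ precisely because $\Phi_b(2)$ for $13\mid b$ or $11\mid b$ was largely untouched in \cite{jacob}). There is no soft argument guaranteeing that a covering with the required multiplicities of admissible primes exists --- existence questions for covering systems with prescribed constraints are notoriously delicate --- so your proof is incomplete until this construction is actually carried out and verified, which is the paper's main technical contribution.
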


As a corollary to the proof of Theorem~\ref{themainthm}, we will also establish the following.

\begin{corollary}\label{firstcor}
For every positive integer $k$, there exist $k$ consecutive primes that are widely digitally delicate in both base $2$ and base $10$.
\end{corollary}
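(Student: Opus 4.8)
The plan is to mimic the construction behind Theorem~\ref{themainthm}, but instead of pairing the widely-digitally-delicate-in-base-$10$ covering conditions with the Sierpi\'nski and Riesel (Brier) conditions, I would pair the base-$10$ widely digitally delicate conditions with the base-$2$ widely digitally delicate conditions. Recall that to force a prime $p$ to be widely digitally delicate in base $b$ one builds a covering system $\{a_i \pmod{b_i}\}$ together with distinct primes $q_i \mid (b^{b_i}-1)$, and then imposes congruences on $p$ guaranteeing that for every digit position $j$ and every allowed digit change $d$, the altered number $p \pm d\cdot b^{j}$ is divisible by one of the $q_i$ (the position $j$ lands in the residue class $a_i \pmod{b_i}$, and $b^{b_i}\equiv 1 \pmod{q_i}$ makes the divisibility independent of which block $j$ sits in). This is exactly the mechanism used in \cite{jerm} and \cite{jacob}.

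The key steps, in order, would be: (1)~Fix a covering system $\mathcal{C}_{10}$ and associated primes $\{q_i\}$ handling the base-$10$ widely digitally delicate condition, producing an arithmetic progression $p \equiv r_{10} \pmod{M_{10}}$ all of whose prime members are widely digitally delicate in base $10$ (with $\gcd(r_{10},M_{10})=1$ after the usual care to keep $p$ coprime to the moduli and larger than all $q_i$). (2)~Independently fix a covering system $\mathcal{C}_2$ and associated primes $\{q_i'\}$, with the $q_i'$ chosen disjoint from the $q_i$, handling the base-$2$ widely digitally delicate condition and giving a progression $p\equiv r_2 \pmod{M_2}$. Note that changing a binary digit of $p$ means adding or subtracting a single power of $2$, so the base-$2$ setup is the same template with $b=2$; one only needs $q_i' \mid (2^{b_i'}-1)$, and small cases (e.g.\ the digit in the units place, where $p$ must stay odd but could become even — here one shows $p\pm 1$ is composite by a separate congruence, as in the base-$10$ treatment of the last digit) are handled as in \cite{jerm}. (3)~Combine the two progressions by the Chinese Remainder Theorem: since the prime sets $\{q_i\}$ and $\{q_i'\}$ are disjoint, $\gcd(M_{10},M_2)$ divides a product of primes none of which can simultaneously be forced into both systems, and one can arrange (by the standard coprimality bookkeeping) a single progression $p\equiv R \pmod{M}$ with $\gcd(R,M)=1$ whose prime members are widely digitally delicate in both bases. (4)~Apply Shiu's theorem (or Maynard's Theorem 3.3 as quoted in the excerpt) to this progression to extract $k$ consecutive primes lying in it, completing the proof; if one wants the density statement as well, Maynard's strengthening gives it immediately.

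The main obstacle I anticipate is step (3): ensuring the two covering constructions are genuinely compatible. The base-$2$ and base-$10$ coverings may share moduli $b_i$, and then $2^{b_i}-1$ and $10^{b_i}-1$ can share prime factors, so one must choose the auxiliary primes $q_i$ and $q_i'$ with enough freedom — using larger prime divisors, or refining the covering moduli (as is routinely done to enlarge the pool of available primes) — to keep the two families disjoint and to keep the combined modulus $M$ coprime to the target residue $R$. There is also a minor bookkeeping subtlety in the low-order digit positions (the units digit in each base), where "changing the digit" can alter the parity or the residue mod small primes; these are dispatched exactly as in the single-base arguments of \cite{jerm} and \cite{jacob}, just carried out simultaneously for both bases. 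Once disjointness and coprimality are secured, everything else is a direct transcription of the machinery already developed for Theorem~\ref{themainthm}.
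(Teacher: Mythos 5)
Your overall strategy (build covering systems for each base, combine by CRT, finish with Shiu/Maynard) is viable, but it is a genuinely different route from the paper's, and as written it defers the one piece of real work. The paper does \emph{not} construct a fresh covering system for base $2$ at all. Instead it observes that base-$2$ widely digital delicacy amounts to $k \pm 2^{n}$ being composite for all $n \ge 0$, and that this follows \emph{for free} from the Sierpi\'nski and Riesel coverings already built for the Brier-number part of Theorem~\ref{themainthm}: if $T = n\prod_{p \mid A}(p-1)$, then the Sierpi\'nski condition gives a prime $p \mid A$ with $(Am+B)\cdot 2^{T-n}+1 \equiv 0 \pmod{p}$, and since $2^{T}\equiv 1 \pmod p$ this is the same as $(Am+B)\cdot 2^{-n}+1\equiv 0$, i.e.\ $k + 2^{n}\equiv 0 \pmod{p}$ after multiplying by $2^{n}$; the Riesel condition handles $k-2^{n}$ identically. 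Then Lemma~\ref{initiallemma} is applied with $b=10$ and $b=2$ to rule out the altered numbers equalling $\pm p$, and Shiu's theorem finishes. This inversion trick sidesteps entirely the compatibility problem you correctly identify as your main obstacle: the prime sets are automatically disjoint because they are the very sets $\mathcal P(A_1)$, $\mathcal P(A_2)$, $\mathcal P(A_3)$ already arranged to satisfy \eqref{primes}.

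The gap in your version is that step (2) asserts, rather than establishes, the existence of a base-$2$ covering whose auxiliary primes avoid everything used in the base-$10$ construction. A covering forcing $k+2^{n}\equiv 0$ modulo one of a fixed set of primes is equivalent (by the same inversion) to a Sierpi\'nski-type covering, so executing your plan would mean redoing essentially all of the computational work in the appendix, including the delicate avoidance of $\mathcal P(A_1)$. That work is exactly what the paper's shortcut recycles. If you add the observation that the Sierpi\'nski/Riesel coverings already imply the base-$2$ conditions via $2^{-n}\equiv 2^{T-n}\pmod p$, your argument collapses to the paper's and the construction in step (2) becomes unnecessary.
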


%%%%%%%%%%%%%%%%%
\section{Some preliminaries and a proof of Corollary~\ref{firstcor}}
In the introduction, we discussed the proof of Theorem~\ref{thm1filjui} based on the use of Shiu's theorem.  
This included finding an arithmetic progression $Am+B$ where $\gcd(A,B)=1$ and every number in the arithmetic progression is a Brier number.  
The goal for  establishing Theorem~\ref{themainthm} is to construct an arithmetic progression $Am+B$, 
with fixed positive relatively prime integers $A$ and $B$, with $A$ divisible by $10$ and having a prime divisor $> 10$, and with a variable nonnegative integer $m$, 
such that every integer $k$ of the form $Am+B$ satisfies 
\begin{itemize}
\item
for every nonnegative integer $n$ and $d \in \{ -9, \ldots, -1, 1, \ldots, 9 \}$, the number $k + d\cdot 10^{n}$ is divisible by at least one prime $p$ dividing $A$, 
\item
for every nonnegative integer $n$, the number $k\cdot 2^n + 1$ is divisible by at least one prime $p$ dividing $A$, and 
\item 
for every nonnegative integer $n$, the number $k\cdot 2^n - 1$ is divisible by at least one prime $p$ dividing $A$.
\end{itemize}
Provided we can also find such a progression that avoids the expressions $k + d\cdot 10^{n}$, $k\cdot 2^n + 1$ and $k\cdot 2^n - 1$ above from being equal to a prime divisor of $A$, then Theorem~\ref{themainthm} will follow directly from 
Maynard's extension of Shiu's theorem mentioned earlier (i.e., Theorem 3.3 of \cite{maynard}).  
Observe that for $m \ge 2$, we have $k\cdot 2^n + 1$ and $k\cdot 2^n - 1$ are both at least $(2A+B) 2^{n} - 1 \ge 2A$, which is larger than any prime divisor of $A$.  
By replacing $B$ then by $2A+B$, we obtain a new arithmetic progression with every element contained in the previous progression, so that the three bulleted items above are still satisfied.  Thus, with $B$ so changed, the expressions $k\cdot 2^n + 1$ and $k\cdot 2^n - 1$ cannot equal a prime divisor of $A$. 
The next lemma allows us to adjust $A$ and $B$ further so that the expressions $k + d\cdot 10^{n}$ in the first bullet above cannot equal a prime divisor of $A$. 

\begin{lemma}\label{initiallemma}
Let $b$ be an integer $\ge 2$. 
Let $A$ and $B$ be positive relatively prime integers such that $A$ has a prime divisor $> b$ and every prime dividing $b$ divides $A$.
Suppose further that for every nonnegative integers $m$ and $k$ and for
\[
d\in \{ -(b-1), -(b-2), \ldots, -1 \} \cup \{ 1, 2, \ldots, b-1 \}, 
\]
there is a prime $p$ dividing $A$ which also divides $Am+B+d\cdot b^k$. 
Then, a subprogression $A_0 m + B_0$ can be found, with $A_0$ and $B_0$ relatively prime and $A$ dividing $A_{0}$,
such that for every nonnegative integers $m$ and $k$ and for $d$ as above, there is a prime $p$ dividing $A_{0}$ with
\[
p \mid 
\big( A_0m+B_0+d\cdot b^k \big)
\quad \text{ and } \quad 
A_0m+B_0+d\cdot b^k\neq \pm p.
\]
\end{lemma}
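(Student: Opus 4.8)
The plan is to take $A_0 = A\,q$ for a single new prime $q$ with $q \nmid Ab$, chosen at the end, and to pick $B_0$ with $B_0 \equiv B \pmod{A}$, with its residue modulo $q$ avoiding a prescribed set, and then enlarged by a multiple of $A_0$ so that $B_0 \ge \max\{A_0, B\}$ (the last condition makes $A_0 m + B_0$ a subprogression of $A m + B$). The point of keeping $B_0 \equiv B \pmod{A}$ is that the covering hypothesis transfers for free: if $p$ is a prime dividing $A$, then $p \mid A_0 m$, so for all $m, k$ and admissible $d$ we have $p \mid (A_0 m + B_0 + d\,b^k)$ if and only if $p \mid (B_0 + d\,b^k)$ if and only if $p \mid (B + d\,b^k)$, and by the hypothesis (applied with $m = 0$ there) some such $p$ exists. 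Thus, writing $N = A_0 m + B_0 + d\,b^k$, the set $P_N$ of primes dividing $\gcd(N, A_0)$ is always nonempty, and the conclusion can fail at a triple $(m, k, d)$ only if $N = \pm p$ for \emph{every} $p \in P_N$. Since two distinct primes cannot both equal $|N|$, such a failure forces $P_N$ to consist of a single prime $p$ with $|N| = p$; and since $A$ has at least two distinct prime factors (one dividing $b$ and one exceeding $b$), so does $A_0$, whence $p < A_0$ and $|N| < A_0$. I will call such a triple \emph{bad}, and it suffices to choose $q$ and $B_0$ so that no bad triple exists.

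I would handle the two sign ranges of $d$ separately. If $d > 0$, then for $m, k \ge 0$ we have $N \ge B_0 + d \ge A_0 + 1$, so $|N| \ge A_0$ and the triple is not bad; hence requiring $B_0 \ge A_0$ eliminates all bad triples with $d > 0$. Now suppose $d = -e$ with $1 \le e \le b-1$ and that $(m, k, d)$ is bad, say $A_0 m + B_0 - e\,b^k = \pm p$ with $p \mid A_0$ prime. Then $e\,b^k \equiv B_0 \mp p \pmod{A_0}$, and in particular $e\,b^k \equiv B_0 \mp p \pmod{q}$. Since $q \nmid b$, the residue $e\,b^k \bmod q$ lies in $e\langle b\rangle_q$, where $\langle b\rangle_q \le (\mathbb{Z}/q\mathbb{Z})^{\times}$ is the subgroup generated by $b$. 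It follows that $B_0 \bmod q$ lies in $e\langle b\rangle_q$ (when $p = q$) or in $e\langle b\rangle_q \pm p_i$ for one of the primes $p_1, \dots, p_r$ dividing $A$ (when $p = p_i$). So it is enough to choose $B_0$ with $B_0 \bmod q$ outside
\[
\mathcal{F} \;=\; \{0\} \;\cup\; \bigcup_{e=1}^{b-1}\ \bigcup_{\varepsilon \in \{0,\,\pm p_1,\dots,\pm p_r\}} \bigl(\varepsilon + e\langle b\rangle_q\bigr) \;\subseteq\; \mathbb{Z}/q\mathbb{Z},
\]
a set of size at most $1 + (b-1)(2r+1)\operatorname{ord}_q(b)$; putting $0$ into $\mathcal{F}$ also ensures $\gcd(B_0, q) = 1$, while $\gcd(B_0, A) = 1$ is automatic from $B_0 \equiv B \pmod{A}$ and $\gcd(A, B) = 1$. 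The Chinese Remainder Theorem then produces $B_0$, which I enlarge by a multiple of $A_0$ to arrange $B_0 \ge \max\{A_0, B\}$; by the above, no bad triple survives, so for all $m, k \ge 0$ and admissible $d$ there is a prime $p \mid A_0$ with $p \mid (A_0 m + B_0 + d\,b^k)$ and $A_0 m + B_0 + d\,b^k \ne \pm p$, which is the assertion of the lemma.

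Everything thus reduces to the one remaining point, which I expect to be the main obstacle: producing a prime $q \nmid Ab$ with $\operatorname{ord}_q(b) < (q-1)/((b-1)(2r+1))$, so that $\mathcal{F} \ne \mathbb{Z}/q\mathbb{Z}$. A pure size argument will not do here, because for $d < 0$ the value $A_0 m + B_0 + d\,b^k$ is arbitrarily small in absolute value for infinitely many pairs $(m,k)$; one must exploit the residue structure modulo a well-chosen new modulus. That primes $q$ with $\operatorname{ord}_q(b)$ small relative to $q$ exist in abundance is standard: fixing a prime $\ell > (b-1)(2r+1)$, the Chebotarev density theorem applied to $\mathbb{Q}(\zeta_\ell, b^{1/\ell})$ yields infinitely many primes $q \equiv 1 \pmod{\ell}$ for which $b$ is an $\ell$-th power modulo $q$; for these, $\operatorname{ord}_q(b) \mid (q-1)/\ell$, hence $\operatorname{ord}_q(b) \le (q-1)/\ell < (q-1)/((b-1)(2r+1))$. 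Discarding the finitely many such $q$ dividing $Ab$ leaves a valid choice of $q$, and then the construction above goes through. (An elementary alternative, using a suitably large prime divisor of the cyclotomic value $\Phi_\ell(b)$, is also available.)
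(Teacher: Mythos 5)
Your argument is correct, but it follows a genuinely different route from the paper's. You enlarge the modulus by one \emph{new} auxiliary prime $q\nmid Ab$, observe that a failure $A_0m+B_0+d\,b^k=\pm p$ with $d<0$ forces $B_0\bmod q$ into one of at most $1+(b-1)(2r+1)\operatorname{ord}_q(b)$ residue classes, and then choose $B_0\bmod q$ outside that set; this requires a prime $q$ for which $b$ has small order relative to $q$, which you obtain from Chebotarev applied to $\mathbb{Q}(\zeta_\ell,b^{1/\ell})$. The paper stays entirely inside the primes of $A$: with $A'$ the part of $A$ coprime to $b$, $v=\phi(A')$ and $\ell$ large, it sets $A_0=b^{\ell(v+2)}A$ and $B_0=b^{\ell(v+1)}-b^{\ell}+B$, so that $A\mid b^{\ell(v+1)}-b^{\ell}$ keeps $A_0m+B_0$ in the original progression; then for $k\le\ell(v+1)-1$ the quantity $A_0m+B_0+d\,b^k$ exceeds $b^{\ell}>A\ge p$, while for $k\ge\ell(v+1)$ it is congruent to $-b^{\ell}+B$ modulo $b^{2\ell}$, a class represented by an integer in $(A,b^{2\ell}-A)$ and hence never equal to $\pm p$. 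What the paper's construction buys is that it is completely explicit and elementary (only Euler's theorem), and $\mathcal P(A_0)=\mathcal P(A)$ since every prime of $b$ already divides $A$; what yours buys is a transparent reduction of the problem to avoiding finitely many residue classes modulo a single well-chosen prime, at the cost of invoking Chebotarev. Your transfer of the covering hypothesis, the coprimality of $A_0$ and $B_0$, and the case analysis over the sign of $d$ are all sound, so the proof stands; only your parenthetical ``elementary alternative'' via a large prime factor of $\Phi_\ell(b)$ would need additional care (prime factors of $\Phi_\ell(b)$ can occur with high multiplicity, so a large prime factor is not immediate), but that aside is not needed for your argument.
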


\begin{proof}
Let $A'$ be the largest positive integer dividing $A$ that is relatively prime to $b$.
Then there is a positive integer $u$ such that $A$ divides $b^{u} A'$.  
Furthermore, taking $v = \phi(A')$ and a positive integer $\ell \ge u$, we see that $A$ divides $b^{\ell (v+1)} - b^{\ell}$.
We take $\ell \ge \max\{ u, 2\}$ sufficiently large so that also $b^{\ell} > A + B$, and set
\[
A_{0} = b^{\ell (v+2)} A 
\qquad \text{and} \qquad
B_{0} = b^{\ell (v+1)} - b^{\ell} + B.
\]
Note that $\ell \ge 2$ ensures that $\ell (v+1) - 1 \ge 2\ell - 1 \ge \ell + 1$, which we use momentarily.
Since $A$ divides $b^{\ell (v+1)} - b^{\ell}$, we see that for every nonnegative integer $m$, there is a nonnegative integer $m'$ such that $A_{0} m + B_{0} = A m' + B$, 
so the arithmetic progression $A_{0} m + B_{0}$ is a subprogression of the arithmetic progression $Am + B$.
By the properties of the progression $Am+B$, we deduce that for every nonnegative integers $m$ and $k$ and for $d\in \{ -(b-1), -(b-2), \ldots, -1 \} \cup \{ 1, 2, \ldots, b-1 \}$, there is a prime $p$ dividing $A$, and hence $A_{0}$, with
$p \mid \big( A_0m+B_0+d\cdot b^k \big)$.
Furthermore, if $k \le \ell (v+1) - 1$, we see that
\begin{align*}
A_0m+B_0+d \cdot b^k 
&= b^{\ell (v+2)} A m + b^{\ell (v+1)} - b^{\ell} + B + d \cdot b^k \\
&\ge b^{\ell (v+1)} - b^{\ell} + B - (b-1) b^{\ell (v+1) - 1} \\
&= b^{\ell (v+1) - 1} - b^{\ell} + B 
\ge b^{\ell + 1} - b^{\ell} + B \\
&> b^{\ell} > A
\end{align*}
for every nonnegative integer $m$.  Also, if $k \ge \ell (v+1)$, then $k \ge 2\ell$ so that
\[
A_0m+B_0+d \cdot b^k \equiv - b^{\ell} + B \pmod{b^{2\ell}}.
\]
Since $b^{\ell} > A+B$, we see that 
\[
- b^{2\ell} + A < b^{\ell} - b^{2\ell} < - b^{\ell} < - b^{\ell} + B < -A.
\]
In this case, $A_0m+B_0+d \cdot b^k$ is in a residue class modulo $b^{2\ell}$ represented by an integer in $(A, b^{2\ell}-A)$.  
In both cases, that is $k \le \ell (v+1) - 1$ and $k \ge \ell (v+1)$, we deduce that since $p \le A$, we have
$A_0m+B_0+d\cdot b^k \neq \pm p$.
\end{proof}

As noted before Lemma~\ref{initiallemma}, we will find an arithmetic progression $Am+B$, with $10\mid A$ and $A$ divisible by a prime $> 10$, satisfying the bullets above.  The comments before Lemma~\ref{initiallemma} together with Lemma~\ref{initiallemma} with $b = 10$ allow us to find a subprogression of $Am+B$ such that every prime $k$ in the subprogression is both widely digitally delicate and a Brier number.  Maynard's extension of Shiu's theorem will then imply Theorem~\ref{themainthm}.

We are now ready to prove Corollary~\ref{firstcor}.

\begin{proof}[Proof of Corollary~\ref{firstcor} (assuming the existence of $Am+B$ as above)]
To establish a prime $k$ is widely digitally delicate in base $2$, 
it suffices to show $k \pm 2^{n}$ is composite for all nonnegative integers $n$. 
Let $k = Am+B$, with $A$ and $B$ as above and $m$ a nonnegative integer.
Fix a nonnegative integer $n$.
Let 
\[
T = n \prod_{p \mid A} (p-1).
\]
Then by the second bullet above, there is a prime $p$ dividing $A$ such that
\[
(Am+B)\cdot2^{T-n} + 1 \equiv 0 \pmod{p}.
 \]
 Note that this congruence implies $p$ is odd. 
 Since $2^T \equiv 1 \pmod p$, we deduce
 \[
 (Am+B)\cdot 2^{-n} + 1 \equiv 0 \pmod{p}.
 \]
 Multiplying both sides of the congruence by $2^n$ gives 
 \[
k + 2^n \equiv 0 \pmod{p}.
 \]
 Thus, for each nonnegative integer $n$, the number $k + 2^{n}$ is divisible by a prime dividing $A$. 
 Similarly, for each nonnegative integer $n$, we can see that the third bullet above implies that the number $k - 2^{n}$ is divisible by a prime dividing $A$. 
 By applying Lemma~\ref{initiallemma} first with $b = 10$ and then with $b = 2$, we see that there is a subprogression of $Am+B$ containing infinitely many primes such that every prime in the subprogression is widely digitally delicate in both base $10$ and base $2$.  
Shiu's theorem applies as before to complete the proof.
\end{proof}

%%%%%%%%%%%%%%%%%
\section{The coverings}
As explained in the previous section, we are interested in finding an arithmetic progression $Am+B$, with $\gcd(A,B) = 1$ and with $A$ divisible by $10$ and some prime $> 10$, satisfying the bulleted items at the beginning of the previous section.  
By replacing $A$ with a power of $A$, we may suppose that $B < A$ and do so.
We will refer to properties (i), (ii) and (iii) analogous to the bullets of the previous section as follows.

\begin{enumerate}[(i)]
\item If $d \in \{ -9, -8, \ldots, -1 \} \cup \{ 1, 2, \ldots, 9 \}$, then each number
in the set 
\[
\mathcal A_{d} = \mathcal A_{d}(A,B)= \big\{ Am+B + d\cdot 10^{n}: m \in \mathbb Z^{+}\cup \{0\},\  n \in \mathbb Z^{+} \cup \{ 0 \} \big\}
\]
is composite.

\item Each number in the set
\[
\mathcal B_{S}= \mathcal B_{S}(A,B)=\lbrace (Am+B) \cdot 2^n +1: m \in \mathbb Z^{+}\cup \{0\}, \ n\in \mathbb{Z}^{+}\cup \{0\} \rbrace\]
is composite.

\item Each number in the set
\[
\mathcal B_{R}=\mathcal B_{R}(A,B)=\lbrace (Am+B) \cdot 2^n -1: m \in \mathbb Z^{+}\cup \{0\},  \ n\in \mathbb{Z}^{+}\cup \{0\} \rbrace
\]
is composite.
\end{enumerate}
In the above, a negative integer is composite if its absolute value is composite.

The statement (i) is exactly the condition we want for each prime in the progression $Am+B$ to be widely digitally delicate. 
Similarly, (ii) and (iii), imply that each prime in the progression $Am+B$ is a Sierpi\'{n}ski number and Riesel number, respectively. 

Note that we want relatively prime $A$ and $B$ satisfying (i), (ii), and (iii). However, we go about this indirectly by finding relatively prime $A_1$ and $B_1$ so that each number in $\mathcal{A}_d(A_1,B_1)$ is composite, by finding relatively prime $A_2$ and $B_2$ so that each number in $\mathcal{B}_S(A_2,B_2)$ is composite, and by finding relatively prime $A_3$ and $B_3$ so that each number in $\mathcal{B}_R(A_3,B_3)$ is composite. Thus, for example, every positive integer that is $B_1$ modulo $A_1$ is such that if we add $d\cdot 10^n$ to the integer, where $n\in \mathbb{Z^+}\cup \{0\}$ and $d \in \{ -9, -8, \ldots, -1 \} \cup \{ 1, 2, \ldots, 9 \}$,  the resulting number is composite. 

Let 
\[
\mathcal{P}(A)=\{ p : p \text{ is prime and } p|A\}.
\]
We will take each of $A_1$, $A_2$, and $A_3$ to be a product of distinct primes. In other words, each $A_j$ is squarefree. The $A_j$'s and $B_j$'s will have the properties
\begin{equation}\label{primes}
\begin{gathered}
\mathcal{P}(A_1)\cap \mathcal{P}(A_2)=\emptyset, \quad \mathcal{P}(A_1)\cap \mathcal{P}(A_3)=\emptyset, \quad \mathcal{P}(A_2)\cap \mathcal{P}(A_3)=\{2,5\},\\
B_2\equiv B_3\hskip-4pt \pmod{10}.
\end{gathered}
\end{equation}
By applying the Chinese Remainder Theorem, we can then establish (i), (ii), and (iii) for some relatively prime $A$ and $B$ by taking $A=A_1A_2A_3/10$ and $B\in [0,A-1]$ so that
\begin{equation} \label{cong}
\begin{gathered}
B\equiv B_1 \hskip-4pt \pmod {A_1}, \quad B\equiv B_2 \hskip-4pt \pmod {A_2/10}, \quad B\equiv B_3 \hskip-4pt \pmod {A_3/10}, \\
B\equiv B_2\equiv B_3 \hskip-4pt \pmod {10}. 
\end{gathered}
\end{equation}

The first and third authors \cite{jacob} constructed $A_1$ squarefree and $B_1$ so that (i) holds with $\mathcal{A}_d(A,B)$ replaced by $\mathcal{A}_d(A_1,B_1)$. Thus, this paper will focus on constructing the pair $(A_2,B_2)$ as well as the pair $(A_3,B_3)$ such that \eqref{primes} holds and (ii) and (iii) hold with $\mathcal{B}_S(A,B)$ and $\mathcal{B}_R(A,B)$ replaced by $\mathcal{B}_S(A_2,B_2)$ and $\mathcal{B}_R(A_3,B_3)$, respectively.

Since the construction of the pairs $(A_2,B_2)$ and $(A_3,B_3)$ is similar, we will discuss the construction of the pair $(A_2,B_2)$ and then explain how this translates to constructing the pair $(A_3,B_3)$. To show that $(A_2m+B_2)\cdot 2^n+1$ is composite for all nonnegative integers $n$, we will show that for each nonnegative integer $n$,  there is a prime, $p\in \mathcal{P}(A_2)$ such that $p$ divides $(A_2m+B_2)\cdot 2^n+1$.  We will choose $A_2$ and $B_2$ large enough so that each number of the form $(A_2m+B_2)\cdot 2^n+1$, with $m$ and $n$ in $\mathbb{Z^+}\cup \{0\}$, is greater than each prime in $\mathcal{P}(A_2)$. Thus, every number of the form $(A_2m+B_2)\cdot 2^n+1$ will be composite.

For a prime $p\in \mathcal{P}(A_2)$, observe that $(A_2m+B_2)\cdot 2^n+1$ is divisible by $p$ if and only if $B_2\cdot 2^n+1$ is divisible by $p$.  Initially, we do not know the values of $A_2$ and $B_2$; we want to construct them.
The idea then is to find a finite set $\mathcal P_2$ of primes so that for some positive integer $B_2$ and all nonnegative integers $n$, the number $B_2\cdot 2^n+1$ is divisible by one of the primes in $\mathcal P_2$. Then $A_2$ will be determined by taking $A_2$ to be the product of the primes in $\mathcal P_2$ so that $\mathcal{P}(A_2) = \mathcal P_2$.  We want to construct $B_2$ as above in such a way that $\gcd(A_2,B_2) = 1$. The focus now is on finding the set $\mathcal P_2$ and how to construct $B_2$ from this set.

For an odd prime $p$ and an arbitrary integer $a$, we can determine $B_2 \equiv -2^{-a} \pmod{p}$ so that
\[
B_2 \cdot 2^{n} + 1 \equiv 0 \pmod{p},
\]
when $n \equiv a \pmod{b}$ with $b = \text{ord}_{p}(2)$.  The idea now is to determine primes $p$ (our set $\mathcal P_2$) so that the orders of $2$ modulo these primes and appropriate choices for $a$ as above provide us with a list of congruence classes $n \equiv a \pmod{b}$ that form a covering system for the integers.  In this way, for every nonnegative integer $n$, there will be a prime $p$ such that $p$ divides $B_2 \cdot 2^{n} + 1$, where $p$ depends on a congruence class satisfied by $n$.

We now use an example to illustrate how a congruence class in the covering system gives a congruence class for $B_2$ to satisfy.
We take $p = 5$.  The order of $2$ modulo $5$ is $4$, so the modulus for the congruence on $n$ will be $4$. 
Suppose we want the congruence $n\equiv 0 \pmod 4$ in the covering system for $n$. If we let $B_2 \equiv -2^{-0} \equiv 4 \pmod 5$, then for some integer $t$, we have
\[
B_2\cdot 2^n+1
= B_2\cdot 2^{4t}+1
\equiv 4 \cdot 1 +1 \equiv 0 \hskip-4pt\pmod{5}. 
\]
Thus, whenever $n\equiv 0 \pmod 4$, the number $B_2\cdot 2^n+1$ is divisible by $5$.  Then $n\equiv 0 \pmod 4$ is part of the covering system we want to help establish (ii), where we want 
\[
A_2\equiv 0 \hskip-4pt \pmod 5 \quad \text{ and } \quad B_2\equiv 4 \hskip-4pt\pmod 5.
\]

In order for \eqref{primes} to hold, observe that we need $5 \not\in \mathcal{P}(A_1)$. 
The value of $A_1$ is given in \cite{jacob}, and throughout this paper, we avoid using primes in $\mathcal{P}(A_1)$.  It is the case that $5 \not\in \mathcal{P}(A_1)$, so using $5$ as above is permissible.
We note that the primes $2$ and $5$ were not in $\mathcal{P}(A_1)$ because, in \cite{jacob}, the authors were interested in choosing moduli for the coverings based on the order of $10$ modulo the primes dividing $A_{1}$.  This led them to avoiding the primes $2$ and $5$ for which no such order exists.  Similarly in this paper, we will want $2$ to have an order modulo each of the primes dividing $A_2$ or $A_3$, and thus we will seemingly want to avoid the prime $2$.  However, we have already indicated that we want $A_j m + B_j$ to be odd for $j \in \{ 2, 3 \}$, so we are taking $2$ to be in both $\mathcal{P}(A_2)$ and $\mathcal{P}(A_3)$ with the added conditions that $B_2$ and $B_3$ are $1$ modulo $2$.

Now, suppose we want to show that $(A_2m+B_2) \cdot 2^n + 1$ is composite as in (ii) whenever $n \equiv 2 \pmod{4}$.
Since $5$ is the only prime $p$ with $2$ of order $4$ modulo $p$, the idea is to work modulo $8$ instead and show that $(A_2m+B_2) \cdot 2^n + 1$ is composite when $n \equiv 2 \pmod{8}$ and when $n \equiv 6 \pmod{8}$.  Each of these will require a number of congruence classes, particularly since we want to avoid primes in $\mathcal P(A_{1})$ and the only prime $p$ with $2$ of order $8$ modulo $p$ is $p = 17 \in \mathcal P(A_{1})$.
For the discussion now, we consider the case of showing $(A_2m+B_2) \cdot 2^n + 1$ is composite when $n \equiv 2 \pmod{8}$. 

By the above arguments, we want to find primes so that if $B_2$ satisfies certain congruence classes, then whenever $n \equiv 2 \pmod{8}$, one of these primes divides $B_2 \cdot 2^n + 1$.
The primes we found are given in the second column in Table \ref{covereg}.
Momentarily, we will describe how we determined the primes more clearly, but if 
$n \equiv a \pmod{b}$ and $p$ are the columns in a row of Table \ref{covereg}, then the order of $2$ modulo $p$ is $b$.
To see that every integer $n \equiv 2 \pmod{8}$ satisfies a congruence class in the first column, observe that if $n \equiv 128 \pmod{144}$, then $n$ satisfies one of the last 3 congruence classes in the first column of Table \ref{covereg}.  With the prior two congruence classes modulo $144$, every $n \equiv 42 \pmod{48}$ will satisfy one of the last 5 congruence classes in the first column.  If $n \equiv 74 \pmod{96}$, then it satisfies one of the congruence classes in rows 4-6 in the first column.  Thus, if $n \equiv 26 \pmod{48}$, then it satisfies one of the congruence classes in rows 3-6 in the first column. Now, if $n \equiv 10 \pmod{16}$, then it satisfies one of the congruence classes in rows 2-10 in the first column. Since integers which are $2$ modulo $16$ satisfy the first congruence class in the first column, we see that every integer $n \equiv 2 \pmod{8}$ satisfies at least one congruence class in the first column of Table \ref{covereg}.

 \begin{table}[!hbt]
  \centering
 \caption{Congruence classes used to satisfy $n\equiv 2 \pmod 8$}\label{covereg}
\begin{tabular}{|c|c|}
\hline Congruence class & prime \\
\hline \hline
$n \equiv 2\pmod{16}$ & 257\\
\hline
$n \equiv 10 \pmod{48}$ & 673\\
\hline
$n \equiv 26 \pmod {96}$ & 22253377\\
\hline
$n \equiv 74 \pmod {288}$ & 1153\\
\hline
$n \equiv 170\pmod {288}$ & 6337\\
\hline
$n \equiv 266 \pmod {288}$ & 38941695937\\
\hline
$n \equiv 42 \pmod {144}$ & 577\\
\hline
$n \equiv 90 \pmod {144}$ & 487824887233\\
\hline
$n \equiv 138 \pmod {432}$ & 4261383649\\
\hline
$n \equiv 282\pmod {432}$ & 209924353\\
\hline
$n \equiv 426 \pmod{432}$ & 24929060818265360451708193\\
\hline
\end{tabular}
\end{table}

So we will choose $A_2$ so that it is divisible by each prime in the second column of Table \ref{covereg}.  Corresponding to each congruence $n \equiv a \pmod{b}$ and prime $p$ in a row, we take $B_2 \equiv -2^{-a} \pmod{p}$ as noted earlier.  Then whenever $n \equiv 2 \pmod{8}$ and $m \in \mathbb Z$, we have $(A_2m+B_2) \cdot 2^n + 1$ is divisible by a prime in the second column of Table \ref{covereg}.

Next, we describe more clearly where our choice of primes came from. The idea in the example above is to find a system $\mathcal C$ of congruence classes such that every $n \equiv 2 \pmod{8}$ satisfies a congruence $n \equiv a \pmod{b}$ in $\mathcal C$.  Each such congruence class will correspond to a prime $p \not\in \mathcal P(A_1)$ for which $2$ has order $b$ modulo $p$. 
To find the primes $p$ for which $2$ has order $b$ modulo $p$, where $b$ is a modulus we want to use for $\mathcal C$, we look at the prime divisors of $\Phi_{b}(2)$, where $\Phi_{b}(x)$ is the $b^{\rm th}$ cyclotomic polynomial. As noted in \cite{jacob} and \cite{jj} (with $2$ replaced by $10$), each prime divisor $p$ of $\Phi_{b}(2)$ either will be the largest prime divisor of $b$ or will satisfy that $2$ has order $b$ modulo $p$.  Thus, looking at prime divisors of $\Phi_{b}(2)$, which are not in $\mathcal{P}(A_1)$, determines whether we can use the modulus $b$ for $\mathcal C$ and how many times we can use $b$ as a modulus.
Thus, in general, we found what moduli were possible for our systems of congruence classes by looking at the number of distinct prime divisors of $\Phi_{b}(2)$, not in $\mathcal{P}(A_1)$, for different choices of $b$, and then we determined our covering systems based on this information.

Observe in Table~\ref{covereg} that we could have interchanged the primes with $2$ of a given order $b$.  For example, for the congruences $n \equiv 74 \pmod{288}$, $n \equiv 170 \pmod{288}$, and $n \equiv 266 \pmod{288}$, we could have selected the primes in the right-most column as $38941695937$, $1153$, and $6337$, respectively.

Tables~\ref{orderofprimes} and \ref{orderofunusedprimes1} describe the results of looking at prime factors of $\Phi_{b}(2)$ to determine the moduli $b$ we can use for our covering systems and the number of times we can use each modulus (that is, the number of distinct prime factors of $\Phi_{b}(2)$ that do not divide $b$ and are not in $\mathcal{P}(A_1)$). Table \ref{orderofprimes} indicates the number of distinct prime factors of $\Phi_{b}(2)$ that do not divide $b$ and are not in $\mathcal{P}(A_1)$ but are in $\mathcal{P}(A_2)
\cup \mathcal{P}(A_3)$. Table \ref{orderofunusedprimes1} shows the remaining number of distinct prime factors of $\Phi_{b}(2)$ which we know exist and do not divide $b$ and are not in $\mathcal{P}(A_1)$ for each modulus. If a modulus $b$ appears in Table~\ref{orderofprimes} but not in Table~\ref{orderofunusedprimes1}, then all the distinct prime factors of $\Phi_{b}(2)$ that do not divide $b$ and are not in $\mathcal{P}(A_1)$ have been used in $\mathcal{P}(A_2)
\cup \mathcal{P}(A_3)$.

The above describes how we obtained a covering system for determining $A_2$ and $B_2$ so that (ii) holds.  We obtain a covering system for determining $A_3$ and $B_3$ similarly so that (iii) holds. In this case, each congruence $n \equiv a \pmod{b}$ corresponds to an odd prime $p$ dividing $A_3$, with $b$ equal to the order of $2$ modulo $p$, and we want 
\begin{equation}\label{B3}
B_3 \equiv 2^{-a} \pmod{p}
\end{equation}
so that
\begin{equation*}
B_3 \cdot 2^{n} - 1 \equiv 0 \pmod{p}
\end{equation*}
for $n \equiv a \pmod{b}$.  
%% By \eqref{primes}, since $2$ has order $4$ modulo $5$ and not modulo any other primes not in $\mathcal{P}(A_1)$, the modulus $4$ is the only modulus we can use in our coverings to obtain both (ii) and (iii). 

For $p = 5$ above, we took 
$B_2 \equiv 4 \pmod{5}$.
Also, $B_2$ is odd, so we have $B_2 \equiv 9 \pmod{10}$.   
For \eqref{primes} to hold, we therefore want 
$B_3 \equiv 9 \pmod{10}$.
In particular, $B_3 \equiv 4 \pmod{5}$.  From \eqref{B3}, with $p = 5$, 
we want $a = 2$. As the order of $2$ modulo $5$ is $4$, the congruence we want associated with $p = 5$ and (iii) is $n \equiv 2 \pmod{4}$.

Each prime counted in Table~\ref{orderofprimes} in the last column, besides $p = 5$ with $b = 4$, can be used to provide a congruence class for the covering system corresponding to either (ii) or (iii) and not both, due to the restriction made in \eqref{primes} that $\mathcal{P}(A_2) \cap \mathcal{P}(A_3) = \{ 2,5 \}$. 
The complete covering systems used for determining $A_j$ and $B_j$, for $j \in \{2, 3\}$, can be found in the appendix.  In the next section, we explain how we verified that the congruence classes we tabulated for the covering systems in fact form covering systems.

%%%%%%%%%%%%%%%%%%%%%

\section{Verifying the Covering Systems}
Consider the collection of congruence classes
\[
\mathcal {C}_0=\{ 0\hskip-4pt \pmod 3,\hskip4pt 1\hskip-4pt \pmod 3,\hskip4pt 2\hskip-4pt\pmod 9, \hskip4pt 5 \hskip-4pt\pmod 9, \hskip4pt 8\hskip-4pt\pmod 9\}.
\]
We can verify $\mathcal{C}_0$ is a covering system as follows. Every nonnegative integer is either 0, 1 or 2 $\pmod 3$. The congruence classes $0 \pmod 3$ and $1 \pmod 3$ are in $\mathcal{C}_0$, so the case when an integer is $2 \pmod {3}$ is left to satisfy. Every integer that is $2\pmod 3$ is either $2\pmod 9$, $5 \pmod 9$, or $8 \pmod 9$. These congruence classes modulo 9 are in $\mathcal{C}_0$. Thus, $\mathcal{C}_0$ is a covering system. 

Due to the complexity of the coverings in the previous section, this method for verifying in general a 
\[
\mathcal{C}=\{a_1\hskip-4pt \pmod{b_1}, \quad a_2\hskip-4pt \pmod{b_2} ,\quad \ldots, \quad a_m\hskip-4pt \pmod{b_m}\}
\]
is a covering is quite time consuming. An alternate way of verifying $\mathcal{C}$ is a covering is to check every integer in $[0, \ell-1]$, where $\ell=\text{lcm}(b_i)$. To see this, suppose that every integer in $[0, \ell-1]$ is in at least one congruence class in $\mathcal{C}$. We want to show that all integers $k$ are in at least one congruence class in $\mathcal{C}$.  We can rewrite $k$ as 
\[
k=q\cdot \ell+r
\]
where $q$ and $r$ are integers with $0\leq r\leq \ell-1$. Since  $r\in[0,\ell-1]$, we have that $r$ is in at least one congruence class, $a\pmod{b}$, in $\mathcal{C}$. As a consequence of $b$ dividing $\ell$, we deduce
\[
k\equiv r\equiv a \hskip-4pt\pmod{b}.
\]
Thus, $k$ is in the congruence class $a\pmod b$ in $\mathcal{C}$, as we wanted.

%%% Not necessary for paper but good for dissertation
In the example above with $\mathcal{C}_0$, this process is emulated by checking that every $k\in [0, 8]$ satisfies a congruence class in $\mathcal C_0$. Table \ref{tablenumcong} confirms that $\mathcal C_0$ is a covering by listing a congruence class each $k$ satisfies in $\mathcal{C}_0$ in the second column. 
\begin{center}
\begin{table}[!hbt]
\caption{Verifying the Covering $\mathcal C_0$}\label{tablenumcong}
\centering
\begin{tabular}{|c|c|}
\hline
$k$ & Congruence Classes in $\mathcal C_0$ \\ 
\hline \hline
$0$ & $0  \pmod 3$ \\ \hline 
$1$ & $1 \pmod 3$ \\ \hline 
$2$ & $2 \pmod 9$ \\ \hline 
$3$ & $0\pmod 3$ \\ \hline 
$4$ & $1 \pmod 3$ \\ \hline 
$5$ & $5 \pmod 9$ \\ \hline 
$6$ & $0 \pmod 3$ \\ \hline 
$7$ & $1\pmod 3$ \\ \hline 
$8$ & $8 \pmod 9$ \\ \hline 
\end{tabular} 
\end{table}
\end{center}

While this method works and is easily implemented by a computer, if $\ell=\text{lcm}(b_i)$ is too large, this process takes a substantial amount of time. In the coverings used in this paper for Sierpi\'{n}ski and Riesel numbers, the least common multiples are 236107872000 and 922078080000, respectively. Also, the number of congruence classes in the coverings are 447 and 459, respectively. Thus, an alternative verification method, as seen in the paper  \cite{jacob}, was used. 

Let $\mathcal{C}$ be a set of congruence classes $a_i\pmod{b_i}$. Let $w$ be a positive integer and $u$ be an integer in $[0,w-1]$. 
Let $\mathcal{C}_u$ be the congruence classes $a_i \pmod {b_i}$ in $\mathcal{C}$ such that 
\[
a_i\equiv u\pmod{\gcd{(b_i,w)}}.
\]
If $|\mathcal{C}_u|=0$, then $\mathcal{C}$ is not a covering. Now consider the case that $|\mathcal{C}_u|>0$. Let $\ell'$ be the lcm of the moduli in $\mathcal{C}_u$, and set $d=\gcd(w,\ell')$. In \cite{jacob}, the authors show the following. 
\begin{lemma}\label{lem1}
With the above notation, if for each $u\in[0,w-1]$, every
\[
k=wt+u, \hspace{.5cm} \text{with } t\in[0,(\ell'/d)-1]\cap \mathbb{Z},
\]
satisfies a congruence class in $\mathcal{C}_u$, then $\mathcal{C}$ is a covering system. 
\end{lemma}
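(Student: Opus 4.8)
The plan is to show that the finite verification described in the lemma captures exactly the information needed to decide whether $\mathcal{C}$ covers $\mathbb{Z}$, handling one residue class modulo $w$ at a time. First I would fix $u \in [0,w-1]$ and suppose $k$ is an integer with $k \equiv u \pmod{w}$ that lies in some congruence class $a_i \pmod{b_i}$ of $\mathcal{C}$. Since $\gcd(b_i,w)$ divides both $b_i$ and $w$, reducing the congruences $k\equiv a_i \pmod{b_i}$ and $k \equiv u \pmod{w}$ modulo $\gcd(b_i,w)$ gives $a_i \equiv k \equiv u \pmod{\gcd(b_i,w)}$; that is, the class $a_i \pmod{b_i}$ already belongs to $\mathcal{C}_u$. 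Hence $\mathcal{C}$ is a covering if and only if, for every $u \in [0,w-1]$, every integer $k \equiv u \pmod{w}$ lies in some congruence class of $\mathcal{C}_u$. In particular, if $\mathcal{C}_u = \emptyset$ for some $u$, then no such $k$ is covered and $\mathcal{C}$ is not a covering; otherwise the hypothesis of the lemma is not satisfied for that $u$ and there is nothing to prove, so I may assume each $\mathcal{C}_u$ is nonempty.

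Second, I would use periodicity to reduce the infinitely many $k \equiv u \pmod{w}$ to the finite list in the statement. Fix $u$ with $\mathcal{C}_u$ nonempty, let $\ell'$ be the least common multiple of the moduli appearing in $\mathcal{C}_u$, and set $d = \gcd(w,\ell')$. Whether an integer lies in some class of $\mathcal{C}_u$ depends only on its residue modulo $\ell'$, because each modulus occurring in $\mathcal{C}_u$ divides $\ell'$. Writing the integers in the class $u \pmod{w}$ as $k = wt+u$ with $t \in \mathbb{Z}$, I would observe that $wt+u \equiv wt'+u \pmod{\ell'}$ precisely when $\ell' \mid w(t-t')$, i.e. when $(\ell'/d) \mid (t-t')$. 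Thus the residue of $k = wt+u$ modulo $\ell'$, and therefore its covering status with respect to $\mathcal{C}_u$, depends only on $t$ modulo $\ell'/d$. Since every integer $t$ is congruent modulo $\ell'/d$ to some $t_0 \in [0,(\ell'/d)-1] \cap \mathbb{Z}$, the integer $wt_0+u$ has the same covering status as $wt+u$.

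Combining the two steps: granting the hypothesis, for each $u$ every $k = wt_0+u$ with $t_0 \in [0,(\ell'/d)-1]$ lies in a class of $\mathcal{C}_u$; by the periodicity just described, every integer $k \equiv u \pmod{w}$ then lies in a class of $\mathcal{C}_u \subseteq \mathcal{C}$; and letting $u$ range over $[0,w-1]$ shows every integer is covered, so $\mathcal{C}$ is a covering system. I do not expect a genuine obstacle; the one point needing care is the first reduction — recognizing that any class of $\mathcal{C}$ meeting the residue class $u \pmod{w}$ must automatically lie in $\mathcal{C}_u$, which is what makes restricting attention to $\mathcal{C}_u$ legitimate — after which everything is a routine periodicity count. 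It is also worth confirming that the period in $t$ is exactly $\ell'/d$ and not a proper divisor, which follows at once from $\ell' \mid w(t-t') \iff (\ell'/\gcd(w,\ell')) \mid (t-t')$.
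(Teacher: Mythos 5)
Your proof is correct: the key observation that any class of $\mathcal{C}$ containing an integer $k\equiv u\pmod{w}$ must automatically lie in $\mathcal{C}_u$, followed by the reduction of $t$ modulo $\ell'/d$ via $\ell'\mid w(t-t')\iff(\ell'/\gcd(w,\ell'))\mid(t-t')$, is exactly the intended argument (the paper itself defers the proof to \cite{jacob}, but this is the standard refinement of the full-lcm check $k=q\ell+r$ given just before the lemma). Your handling of the empty $\mathcal{C}_u$ case and your check that the period in $t$ is genuinely $\ell'/d$ are both appropriate; nothing is missing.
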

Let $\mathcal{C}$ be one of the systems of congruence classes created in the previous section, that is for constructing arithmetic progressions for either Sierpi\'{n}ski or Riesel numbers. We take $w=4\cdot 3\cdot 5\cdot q$ where $q$ is the largest prime dividing the least common multiple of the moduli in $\mathcal{C}$  as done in \cite{jacob}. Applying Lemma~\ref{lem1} with this choice of $w$ allowed us to easily verify our congruence classes form a covering system.

\vskip 10pt
\centerline{\textbf{\large Appendix}}
\vskip 7pt \noindent

To aid in verifying the computations in this paper, all the data in this appendix can be found in \cite{Fildatatwo} as lists suitable for computations. 
This appendix begins with Table~\ref{orderofprimes}, which lists the number of distinct primes used, $L=L(b)$, that divide $\Phi_b(2)$, do not divide $b$, and are not in $\mathcal{P}(A_1)$. Not the * indicates the prime 5 is used once in each covering.
The $b$ listed correspond to moduli used in our coverings.  Table~\ref{orderofunusedprimes1} gives a lower bound on number of distinct primes, $M=M(b)$, that divide $\Phi_b(2)$, do not divide $b$, are not in $\mathcal{P}(A_1)$, and are not used in either covering. The * in this table represents an $M(b)$ that is a lower bound; that is, we did not completely factor $\Phi_b(2)$. 

Tables~\ref{covSier} and \ref{covRies} list the congruence classes $n \equiv a \pmod{b}$ that form the covering systems we obtained for $k\cdot 2^n+ 1$ and $k\cdot 2^n-1$ respectively.  
Recall that, with the order $b$ of $2$ fixed, the choice of a prime $p$ associated with a given congruence class does not matter.  Thus, for example, $\Phi_{64}(2)$ has exactly two prime factors, $641$ and $6700417$, neither of which are in $\mathcal P(A_1)$.  These two primes are in the count for $L(64)$ in Table~\ref{orderofprimes}. We associate some ordering of these two primes.  The second column of Table~\ref{covSier} indicates that one of these primes is associated with the congruence $n \equiv 22 \pmod{64}$ and the other is associated with the congruence $n \equiv 54 \pmod{64}$.  Note that we do not attempt to clarify which of the two primes is associated with which congruence as it does not matter.

\newpage

%%%%% Table of all used primes

\begin{table}[tp]
\footnotesize
\centering
\caption{Number of primes used in both coverings, $L=L(b)$}\label{orderofprimes}
\begin{minipage}{1.9 cm} 
\centering 
% % 
% [inline block 0: 26 envs, 51074 chars -> data_tex | \begin{tabular}{|c|c|}  \hline...]
 
 \end{minipage} 
 \centering 
 \end{table}
\end{document}